\newtheorem{theorem}{Theorem}[section]
\newtheorem{lemma}[theorem]{Lemma}
\newtheorem{example}[theorem]{Example}
\newtheorem{remark}[theorem]{Remark}
\def\NN{\hbox{\sf I\kern-.13em\hbox{N}}}
\def\RR{\hbox{\sf I\kern-.14em\hbox{R}}}
\def\Cc{\hbox{\sf C\kern -.47em {\raise .48ex \hbox{$\scriptscriptstyle |$}}
   \kern-.5em {\raise .48ex \hbox{$\scriptscriptstyle |$}} }}
\newcommand{\be}{\begin{equation}}
\newcommand{\ee}{\end{equation}}
\newcommand{\cM}{{\mathcal M}}
\begin{document}

\baselineskip 5.8mm

\title[Uniform boundedness principle for non-linear operators on cones of functions]
{Uniform boundedness principle for non-linear operators on cones of functions}

\author{Aljo\v{s}a Peperko}
\date{\thanks{}\today}

\begin{abstract} 
\baselineskip 7mm
We prove an  uniform boundedness principle for the Lipschitz seminorm of continuous, monotone, positively homogeneous and subadditive mappings  on suitable cones of functions. The result is applicable to several classes of classically non-linear operators.
\end{abstract}

\maketitle

\noindent
{\it Math. Subj.  Classification (2010)}:  47H07, 47H05,  47H30, 47B65.\\
 \\
{\it Key words}: Banach-Steinhaus theorem, Banach spaces of functions, non-linear operators, nonlinear functional analysis, sublinear mappings, Lipschitz mappings,  normal cone, cone preserving mappings, max-kernel operators, nonlinear integral operators.
 \\

\section{Introduction and preliminaries}

Uniform boundedness principle for bounded linear operators (Banach-Steinhaus theorem) in one of the cornerstones of classical functional analysis (see e.g \cite{C90}, \cite{AA02}, \cite{EN00} and the references cited there). In this article we prove a new uniform boundedness principle for monotone, positively homogeneous, subadditive and Lipschitz mappings defined on a suitable cone of functions (Theorem \ref{uniform}). This result is applicable to several classes of classically non-linear operators (Examples \ref{MNuss03}, \ref{bounded_sup},  and Remarks  \ref{PF}, \ref{Hammerstein}).

Let $ \Omega$ be a non-empty set. 
Throughout the article let $X$ denote a vector space of all functions  $\varphi: \Omega \to \mathbb{R}$  or  
a vector space of all equivalence classes of (almost everywhere equal)
real measurable functions on $\Omega$, if $(\Omega, \cM, \mu)$ is a measure space. As usually,  $|\varphi|$ denotes the absolute value of $ \varphi \in X$.

 Let $Y \subset X$ be a vector space and let $Y_+$ denote the positive cone  of $Y$, i.e., the set of all $\varphi \in Y$ such that $\varphi (\omega) \ge 0$ for all (almost all) $\omega \in \Omega$. The space $Y$ is called an ordered vector  space with the partial ordering induced by the cone $Y_+$.  If, in addition, Y is a normed space it is called an ordered normed space. 
 The vector space $Y \subset X$ is called a vector lattice (or a  Riesz space) if for every  $\varphi,\psi \in Y$ we have a supremum and  infimum (greatest lower bound) in $Y$.   If, in addition, Y is a normed space and if $|\varphi| \le |\psi|$ implies $\|\varphi\| \le \|\psi\|$, then $Y$ is a called  normed vector lattice (or a normed  Riesz space). Note that in a normed vector lattice $Y$ we have 
$\||\varphi|\|= \|\varphi\|$ for all $\varphi \in Y$.  A complete normed vector lattice is called a Banach lattice. Observe that $X$ itself is a vector lattice.


 Let $Y \subset X$ be a normed space.  
The cone $Y_+$ is called normal if and only if there exists a constant $C>0$ such
that $\|\varphi\| \le C \|\psi\|$ whenever $ \varphi \le \psi$, $\varphi,\psi \in Y_+$. A cone $Y_+$ is normal if and only if there exists an equivalent monotone norm $||| \cdot|||$ on $Y$, i.e.,  
$|||\varphi||| \le |||\psi|||$ whenever $0 \le \varphi \le \psi$ (see e.g. \cite[Theorem 2.38]{AT07}).
A positive cone of a normed vector lattice  is  closed and normal.
Every closed cone in a finite dimensional
Banach space  is necessarily normal.  

Let $Z \subset Y$ be a cone (not necessarily equal to $Y_+$).
A cone $Z$ is said to be complete if it is a complete metric space in the topology induced by $Y$. In the case when $Y$ is a Banach space this is equivalent to $Z$ being closed in $Y$.  

  A mapping $A: Z \to Z$ is called positively homogeneous
(of degree 1) if $A(t\varphi) = tA(\varphi)$ for all $t \ge 0$ and $\varphi \in Z$. 
A mapping $A: Z \to Z$ is called  Lipschitz if there exists $L >0$ such that $\|A\varphi-A\psi \|\le L\|\varphi-\psi\| $ for all $\varphi,\psi \in Z$ and we denote $$\|A\|_{LIP} = \sup _{\varphi,\psi \in Z, \varphi \neq \psi} \frac{\|A\varphi-A\psi\|}{\|\varphi-\psi\|}.$$ If $A$ is Lipschitz and positively homogeneous, then   $$\|A\|_{LIP}= \sup _{\varphi,\psi, \in Z , \|\varphi-\psi\|=1}\|A\varphi-A\psi\| .$$
Note also that a  Lipschitz and positively homogeneous mapping $A$ on $Z$ is always bounded on $Z$, i.e.,
$$\|A\| = \sup _{\varphi \in Z, \varphi \neq 0} \frac{\|A\varphi\|}{\|\varphi\|}= \sup _{\varphi \in Z , \|\varphi\|=1}\|A\varphi\| $$
is finite and it holds $\|A\| \le \|A\|_{LIP}$. Moreover, a positively homogeneous mapping \\
 $A:Z \to Z$, which is continuous at $0$ is bounded  on $Z$ .

A set $K\subset Y$ is called a wedge if $K+K \subset K$ and if $t K \subset K$ for all $t\ge 0$. A wedge $K$ induces on $Y$ a vector preordering $\le _K$ ($\varphi \le _K \psi$ if and only if $\psi - \varphi \in K$), which is reflexive, transitive, but not necessary antisymmetric.

If $K \subset Y$ is a wedge, then 
$A: K \to Y$  is called subadditive  if 
$A(\varphi + \psi) \le  A \varphi + A \psi$ for $\varphi, \psi \in K$, and is called monotone (order preserving)   if $A\varphi \le A\psi$ whenever $\varphi \le \psi $, $\varphi,\psi \in K$. Note that in this definition of subadditivity and monotonicity
we consider on $Y$ (and on $K$)  
a partial ordering $\le _{Y_+}$ induced by $Y_+$ (not a preordering  $\le _K$). One of the reasons for this choice is that, for example,  it may happen that a non-linear map is   monotone with respect  to the ordering 
$\le _{Y_+}$, but it is not monotone with respect  to the preordering $\le _K$ (see, for instance, \cite[Section 5]{MN10} 
and max-type operators, or  \cite{LN08} and the "renormalization operators" which occur in discussing diffusion on fractals).  Moreover, for similar reasons wherever in our article we consider a subcone $Z \subset Y_+$  we consider on $Z$ a partial ordering $\le _{Y_+}$ induced by $Y_+$ (not a partial ordering $\le _Z$). Observe that in this setting the set $Z-Z$  is a vector subspace in 
$Y$ and thus a wedge.

In our main result (Theorem \ref{uniform}) we will consider a normed space $Y\subset X$ with a normal cone $Y_+$ and a complete subcone $Z \subset Y_+$ that satisfies $|\varphi - \psi| \in Z$ for all $\varphi, \psi \in Z$ and such that
$\|\varphi\|= \||\varphi|\|$ for all $\varphi = \varphi _1 - \varphi _2$ where $\varphi _1, \varphi _2  \in Z$. Since $X$ itself is a vector lattice the above assumptions make sense.
 Note also that a positive cone $Z=Y_+$ of each Banach lattice $Y$ or, in particular, of each Banach function space (see e.g.  \cite{AA02}, \cite{AB85}, \cite{Za83},  
\cite{P17}, \cite{DP16}, \cite{P17b} and the references cited there) satisfies these properties. For the theory of cones, wedges, linear and non-linear operators on cones and wedges, Banach ordered spaces,  Banach function spaces, vector and Banach lattices and applications e.g. in financial mathematics we refer the reader 
to  \cite{AA02}, \cite{AT07}, \cite{APV04}, \cite{Za83}, 
 \cite{AB85}, \cite{W99}, \cite{ABB90}, \cite{LT96}, \cite{JM14}, \cite{MP17}, \cite{MP17b}, \cite{MN02}, \cite{MN10}, 
 and the references cited there.

\section{Results}
We will need the following lemma.
\begin{lemma} Let $Y\subset X$ be a vector space and let  $Z \subset Y_+$ be a  subcone such that $|\varphi - \psi| \in Z$ for all 
$\varphi, \psi \in Z$. 
If $A: Z-Z \to Y$ is a subadditive and monotone mapping, then 
\be
|A\varphi - A\psi| \le A|\varphi - \psi|
\label{basic}
\ee
for all $\varphi , \psi \in Z$.

If, in addition, $Y$ is a normed space such that $Y_+$ is normal and $Z \subset Y_+$ is a subcone  such that
$\|\varphi\|= \||\varphi|\|$ for all $\varphi = \varphi _1 - \varphi _2$ where $\varphi _1, \varphi _2  \in Z$, and if $AZ \subset Z$ and $A$ is bounded on $Z$, then $A$ is Lipschitz on $Z$. 
\label{Lipschitz}
\end{lemma}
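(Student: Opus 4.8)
The plan is to prove the two assertions separately: the first is a purely order-theoretic identity, and the second follows from it together with normality and the hypothesised compatibility of $|\cdot|$ with the norm on differences of elements of $Z$.

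For (\ref{basic}), I would fix $\varphi,\psi\in Z$ and start from the lattice inequalities $\varphi-\psi\le|\varphi-\psi|$ and $\psi-\varphi\le|\varphi-\psi|$ in $X$, which rearrange to $\varphi\le\psi+|\varphi-\psi|$ and $\psi\le\varphi+|\varphi-\psi|$. Since $|\varphi-\psi|\in Z$ by hypothesis and $Z$ is a cone, the elements $\psi+|\varphi-\psi|$ and $\varphi+|\varphi-\psi|$ lie in $Z\subset Z-Z$, and the two inequalities hold for the ordering $\le_{Y_+}$ (each relevant difference lies in $Y$ and is $\ge 0$ in $X$, hence in $Y_+$). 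Applying monotonicity and then subadditivity of $A$ on $Z-Z$ gives $A\varphi\le A(\psi+|\varphi-\psi|)\le A\psi+A|\varphi-\psi|$, i.e. $A\varphi-A\psi\le A|\varphi-\psi|$; by symmetry $A\psi-A\varphi\le A|\varphi-\psi|$. Taking in the lattice $X$ the supremum of the two left-hand sides yields $|A\varphi-A\psi|\le A|\varphi-\psi|$.

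For the second assertion, I would take $\varphi,\psi\in Z$. Because $AZ\subset Z$, both $A\varphi$ and $A\psi$ lie in $Z$, so $A\varphi-A\psi$ is a difference of two elements of $Z$; hence $\|A\varphi-A\psi\|=\||A\varphi-A\psi|\|$ by hypothesis, and $|A\varphi-A\psi|\in Z\subset Y_+$. From (\ref{basic}) we get $0\le|A\varphi-A\psi|\le A|\varphi-\psi|$ with $A|\varphi-\psi|\in AZ\subset Z\subset Y_+$ as well, so the normality of $Y_+$ supplies a constant $C>0$ with $\|A\varphi-A\psi\|=\||A\varphi-A\psi|\|\le C\,\|A|\varphi-\psi|\|$. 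Boundedness of $A$ on $Z$ then bounds the right-hand side by $C\,\|A\|\,\||\varphi-\psi|\|$, and since $\varphi-\psi$ is again a difference of two elements of $Z$ the norm identity gives $\||\varphi-\psi|\|=\|\varphi-\psi\|$. Thus $\|A\varphi-A\psi\|\le C\,\|A\|\,\|\varphi-\psi\|$ for all $\varphi,\psi\in Z$, so $A$ is Lipschitz on $Z$ with $\|A\|_{LIP}\le C\,\|A\|<\infty$.

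I do not expect a genuine obstacle here; the only point that needs care is bookkeeping about the ambient spaces — the absolute value is computed in the lattice $X$, and at each step one must check that the element being normed actually lies in $Z$ (so that the norm identity applies) or in $Y_+$ (so that normality applies). Verifying that $A\varphi-A\psi$, $A|\varphi-\psi|$ and $|\varphi-\psi|$ all lie where required is precisely what the standing hypotheses $|\varphi-\psi|\in Z$, $AZ\subset Z$, and $\|\varphi\|=\||\varphi|\|$ for $\varphi\in Z-Z$ are there to guarantee.
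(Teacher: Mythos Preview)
Your proof is correct and follows essentially the same approach as the paper's: for (\ref{basic}) the paper first applies subadditivity to $A\varphi=A((\varphi-\psi)+\psi)$ and then monotonicity to $\varphi-\psi\le|\varphi-\psi|$, whereas you reverse the order (monotonicity on $\varphi\le\psi+|\varphi-\psi|$ first, then subadditivity), but this is a cosmetic variation. For the Lipschitz conclusion your argument is identical to the paper's, including the explicit bound $\|A\|_{LIP}\le C\|A\|$.
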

\begin{proof} Let $\varphi, \psi \in Z$. Since $A: Z-Z \to Y$ is a subadditive, we have 
$$A\varphi=A(\varphi-\psi+\psi) \le A(\varphi-\psi) + A\psi.$$
It follows that $A\varphi-A\psi \le A(\varphi-\psi) \le A|\varphi-\psi|$, since $A$ is monotone and $\varphi-\psi \le |\varphi-\psi|$.
Similarly one obtains that   $A\psi-A\varphi \le  A|\varphi-\psi|$, which proves (\ref{basic}).

Assume that, in addition, $Y$ is a normed space such that $Y_+$  is normal (with a normality constant $C$)  and $Z \subset Y_+$ a subcone   such that
$\| \varphi _1 - \varphi _2\|= \|| \varphi _1 - \varphi _2|\|$ for all  $\varphi _1, \varphi _2  \in Z$, and that $AZ \subset Z$ and $A$ is bounded on $Z$. It follows from (\ref{basic}) that
$$\|A\varphi -A \psi\| = \|| A\varphi - A\psi |\| \le C\| A|\varphi - \psi|\| \le C \|A\| \|\varphi - \psi\|$$
and thus $A$ is Lipschitz on $Z$ (and $\|A\|_{LIP} \le C \|A\|$), which completes the proof. 

\end{proof}

The following uniform boundedness principle is the central result of this article.

\begin{theorem} 
Let $Y\subset X$ be a normed space such that $Y_+$ is normal and let  $Z \subset Y_+$ be a complete subcone, such that $|\varphi - \psi| \in Z$ for all $\varphi, \psi \in Z$ and such that
$\|\varphi\|= \||\varphi|\|$ for all $\varphi \in Z-Z$.
Assume that $\mathcal{A}$ is a set of subadditive and monotone mappings $A: Z-Z \to Y$ such that $AZ \subset Z$ and that  each $A \in \mathcal{A}$ is 
positively homogeneous and  continuous on $Z$. 

If the set $\{A\varphi : A\in \mathcal{A} \}$ is bounded for each $\varphi \in Z$ (i.e.,  for each $\varphi \in Z$ there exists $M_{\varphi} >0$ such that $\|A\varphi\| \le M_{\varphi} $ for all $A\in \mathcal{A}$), then there exists $M>0$ such that $\|A\|_{LIP} \le M$
for all $A \in \mathcal{A}$.
\label{uniform}
\end{theorem}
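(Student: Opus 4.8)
The plan is to mimic the classical Banach–Steinhaus argument via a Baire category (or gliding‑hump) argument, but carried out on the complete metric space $Z$ rather than on a Banach space, and using Lemma \ref{Lipschitz} to pass from boundedness to a Lipschitz bound at the very end.

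\medskip

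First I would reduce the Lipschitz estimate to a uniform \emph{operator‑norm} estimate. By the second part of Lemma \ref{Lipschitz}, each $A\in\mathcal A$ is already Lipschitz on $Z$ with $\|A\|_{LIP}\le C\|A\|$, where $C$ is the normality constant of $Y_+$; so it suffices to produce a single $M'>0$ with $\|A\|\le M'$ for all $A\in\mathcal A$, and then take $M=CM'$. Next I would use positive homogeneity to rephrase boundedness: $\|A\|=\sup_{\varphi\in Z,\ \|\varphi\|\le 1}\|A\varphi\|$, so what must be shown is that $\mathcal A$ is uniformly bounded on the (relatively) closed, bounded set $Z\cap \overline{B}(0,1)$, given that it is pointwise bounded on all of $Z$.

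\medskip

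The core step is a Baire category argument on $Z$. Since $Z$ is a complete metric space, write $Z=\bigcup_{n\in\NN} Z_n$ where $Z_n=\{\varphi\in Z:\ \|A\varphi\|\le n\ \text{for all }A\in\mathcal A\}=\bigcap_{A\in\mathcal A}\{\varphi\in Z:\ \|A\varphi\|\le n\}$. Each $Z_n$ is closed in $Z$: for fixed $A$, continuity of $A$ on $Z$ together with continuity of the norm makes $\varphi\mapsto\|A\varphi\|$ continuous, so each set in the intersection is closed, hence $Z_n$ is closed. By pointwise boundedness $\bigcup_n Z_n=Z$, so by the Baire category theorem some $Z_{n_0}$ has nonempty interior in $Z$: there exist $\varphi_0\in Z$ and $r>0$ with $Z\cap \overline{B}(\varphi_0,r)\subset Z_{n_0}$, i.e. $\|A\psi\|\le n_0$ for every $A\in\mathcal A$ and every $\psi\in Z$ with $\|\psi-\varphi_0\|\le r$.

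\medskip

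Finally I would convert this local bound into a global one using subadditivity, monotonicity and homogeneity — and here is where the cone structure (not available in the naive linear proof) does the work, so this is the step I expect to be the main obstacle. For $\varphi\in Z$ with $\|\varphi\|\le r$, the point $\varphi_0+\varphi$ lies in $Z\cap\overline{B}(\varphi_0,r)$, hence $\|A(\varphi_0+\varphi)\|\le n_0$; I want to bound $\|A\varphi\|$ in terms of $\|A(\varphi_0+\varphi)\|$ and $\|A\varphi_0\|$. Using Lemma \ref{Lipschitz}, inequality (\ref{basic}), with the two points $\varphi_0+\varphi$ and $\varphi_0$ of $Z$ gives $|A(\varphi_0+\varphi)-A\varphi_0|\le A|\varphi_0+\varphi-\varphi_0|=A\varphi$ (note $|\varphi_0+\varphi-\varphi_0|=\varphi$ since $\varphi\in Z\subset Y_+$), so $A\varphi\ge |A(\varphi_0+\varphi)-A\varphi_0|$; and subadditivity gives $A\varphi\le A(\varphi_0+\varphi-\varphi_0)\le A(\varphi_0+\varphi)+A(-\varphi_0)$ — but $-\varphi_0$ need not lie in $Z$, so instead I pair it the other way: $A(\varphi_0+\varphi)\le A\varphi_0+A\varphi$ yields $A\varphi\ge A(\varphi_0+\varphi)-A\varphi_0$, while from $0\le\varphi\le \varphi_0+\varphi$ and monotonicity $A\varphi\le A(\varphi_0+\varphi)$. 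Combining $0\le A\varphi\le A(\varphi_0+\varphi)$ with normality of $Y_+$ gives $\|A\varphi\|\le C\|A(\varphi_0+\varphi)\|\le Cn_0$ for all $\varphi\in Z$, $\|\varphi\|\le r$. Homogeneity then yields $\|A\|\le Cn_0/r=:M'$ uniformly in $A\in\mathcal A$, and $M=C M'=C^2 n_0/r$ works. The delicate points to watch are that $\varphi_0+\varphi\in Z$ (true, $Z$ a cone), that $|\varphi_0+\varphi-\varphi_0|=\varphi\in Z$ so that (\ref{basic}) applies on $Z-Z$, and that we never need $-\varphi_0\in Z$; the monotonicity estimate $0\le A\varphi\le A(\varphi_0+\varphi)$ together with the normal‑cone inequality is what replaces the triangle inequality from the linear proof.
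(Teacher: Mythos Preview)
Your argument is correct. The overall architecture---Baire category on the complete cone $Z$ to find $n_0$, $\varphi_0$, $r$ with $Z\cap\overline{B}(\varphi_0,r)\subset Z_{n_0}$, then a local-to-global step, then Lemma~\ref{Lipschitz} to pass from $\|A\|$ to $\|A\|_{LIP}$---matches the paper. Where you genuinely diverge is in the local-to-global step. The paper works directly with $\|A\varphi-A\psi\|$ for $\|\varphi-\psi\|=1$, rescales, and then forces the auxiliary element $|\varepsilon|\varphi-\psi|-\varphi_0|$ into the Baire ball via a pointwise case analysis (the inequality $||\varepsilon|\varphi-\psi|-\varphi_0|-\varphi_0|<3\varepsilon|\varphi-\psi|$). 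You instead observe that for $\varphi\in Z$ with $\|\varphi\|\le r$ the translate $\varphi_0+\varphi$ already lies in the ball, and the order inequality $0\le\varphi\le\varphi_0+\varphi$ together with monotonicity and normality gives $\|A\varphi\|\le C\|A(\varphi_0+\varphi)\|\le Cn_0$ immediately. This is cleaner: it avoids the absolute-value gymnastics and the pointwise argument on $\Omega$, uses only that $Z$ is a cone (closed under addition) and $A$ is monotone, and yields a slightly better constant. The paper's Remark~(i) acknowledges one can bound $\|A\|$ first and then invoke Lemma~\ref{Lipschitz}, but still routes that through the same absolute-value trick; your shortcut via $\varphi_0+\varphi$ is a real simplification. (Minor comment: the detours in your final paragraph through $|A(\varphi_0+\varphi)-A\varphi_0|\le A\varphi$ and the worry about $-\varphi_0$ are unnecessary---your eventual one-line argument $0\le A\varphi\le A(\varphi_0+\varphi)$ is all that is needed.)
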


\begin{proof} Since $Z$ is closed and each $A \in \mathcal{A}$ is continuous on $Z$ the set 
$$A_n=\{\varphi \in Z: \|A\varphi\|\le n \;\;\mathrm{for} \;\; \mathrm{all} \;\; A\in \mathcal{A}\}$$
is closed in $Y$ for each $n \in \mathbb{N}$. Moreover, $Z$ is a complete metric space and $Z= \cup _{n=1} ^{\infty} A_n$. By Baire's theorem there 
exist $n_0 \in \mathbb{N}$, $\varphi _0 \in Z$ and $\varepsilon >0$ such that  
an open ball $\mathcal{O}(\varphi _0, 3\varepsilon C)= \{\varphi \in Z: \|\varphi -\varphi _0\|< 3 \varepsilon C\}\subset A_{n_0},$
where $C$ is the normality constant of $Y_+$.

Let $\varphi,\psi\in Z$ such that $\|\varphi-\psi\|=1$ and $A\in \mathcal{A}$. 
Since $Z$ is a normal cone and $A$ is positively homogeneous on $Z$, we have by (\ref{basic})
\be
\|A\varphi-A\psi\|= \| |A\varphi-A\psi |\| \le C\|A|\varphi -\psi|\|= \frac{C}{\varepsilon}\|A(\varepsilon|\varphi -\psi|)\|.
\label{estimate}
\ee
Since $A$ is subadditive and monotone on $Z-Z$ we have
$$A(\varepsilon|\varphi -\psi|) = A(\varphi _0 + \varepsilon|\varphi -\psi| - \varphi _0) \le A\varphi _0 + A( \varepsilon|\varphi -\psi| - \varphi _0 ) \le A\varphi _0 + A( |\varepsilon|\varphi -\psi| - \varphi _0 |), $$
which together with (\ref{estimate}) implies
$$ \|A\varphi-A\psi\| \le \frac{C^2}{\varepsilon}\left( \|A\varphi _0 \|+ \|A|\varepsilon|\varphi -\psi|- \varphi _0|\| \right)\le  \frac{C^2}{\varepsilon}\left( n_0+ \|A|\varepsilon|\varphi -\psi|- \varphi _0|\| \right).$$
We also have 
\begin{equation}
||\varepsilon|\varphi -\psi|- \varphi _0|- \varphi _0 | < 3 \varepsilon |\varphi -\psi|. 
\label{abs}
\end{equation}

Indeed, if 
$\varepsilon|\varphi (\omega) -\psi (\omega)|- \varphi _0 (\omega) \le 0$, then 
$$||\varepsilon|\varphi(\omega) -\psi (\omega)|- \varphi _0(\omega)|- \varphi _0 (\omega)|= \varepsilon|\varphi(\omega) -\psi(\omega)| $$ 
and if $\varepsilon|\varphi (\omega) -\psi (\omega)|- \varphi _0 (\omega) > 0$, then
$$||\varepsilon|\varphi(\omega) -\psi (\omega)|- \varphi _0(\omega)|- \varphi _0 (\omega)|= |\varepsilon|\varphi(\omega) -\psi(\omega)|- 2\varphi _0 (\omega) | $$
$$\le \varepsilon|\varphi(\omega) -\psi(\omega)| + 2 \varphi _0 (\omega) <  3\varepsilon|\varphi(\omega) -\psi(\omega)|,$$
which proves (\ref{abs}). 

It follows from (\ref{abs}) that $\||\varepsilon|\varphi -\psi|- \varphi _0|- \varphi _0 \| \le 3C \varepsilon \|\varphi -\psi\|= 3C \varepsilon$ and thus \\
$|\varepsilon|\varphi -\psi|- \varphi _0| \in A_{n_0}$ and so 
$\|A|\varepsilon|\varphi -\psi|- \varphi _0|\|\le n_0$. Therefore
$$\|A\varphi-A\psi\| \le \frac{2C^2 n _0}{\varepsilon} $$
and so $\|A\|_{LIP} \le \frac{2C^2 n _0}{\varepsilon}$.
\end{proof}
\begin{remark} {\rm (i) Each $A\in \mathcal{A}$ satisfies $\|A\| \le \|A\|_{LIP} \le C\|A\|$ (see the proof of Lemma \ref{Lipschitz}). Therefore we could alternatively set $\psi =0$
in the proof above and prove an uniform upper bound 
$\|A\|  \le \frac{2C n _0}{\varepsilon} $ for all $A\in \mathcal{A}$, which gives the same conclusion.

(ii) In the proofs of Lemma \ref{Lipschitz} and Theorem \ref{uniform} we did not need the assumption $Z \cap (-Z) =\{0\}$, so it suffices to assume that $Z$ is a wedge in this two results (not necessarily a cone). 

(iii) Also the assumption on normality of $Y_+$ can be slightly weakened in Lemma \ref{Lipschitz} and Theorem \ref{uniform}. Instead of normality of $Y_+$ it suffices to assume that  there exists a constant $C>0$ such
that $\|\varphi\| \le C \|\psi\|$ whenever $ \varphi \le  \psi$, $\varphi,\psi \in Z$ (where again $\varphi \le  \psi$ means 
$\varphi \le_{Y_+} \psi$)
}
\end{remark}

Our results can be applied to various classes of  non-linear operators. In particular, they apply to various  max-kernel operators (and their isomorphic versions) appearing in the literature (see e.g. \cite{MN02},  \cite{MN10}, \cite{KM97}, \cite{AGN} and the references cited there). We point out the following two related examples from \cite{MN02}, \cite{MN10}, \cite{MP17}, \cite{MP17b}.

\begin{example} 
\label{MNuss03}
{\rm  Given $a>0$, let $Y=C[0,a]$ be Banach lattice of continuous functions on $[0,a]$ equipped with $\|\cdot\|_{\infty}$ norm. Consider the following
max-type kernel operators \\
$A: C[0,a] \to C[0,a]$ of the form
$$(A(\varphi))(s)=\max_ {t\in [\alpha (s), \beta (s)]}{k(s,t)\varphi(t)},$$
where $\varphi \in C[0,a]$ and $\alpha, \beta:[0,a]\to[0,a]$ are given continuous functions satisfying $\alpha \le \beta$.
The kernel  $k: S \to [0, \infty)$ is a given non-negative continuous function, where $S$
 denotes the compact set
$$S=\{(s,t)\in [0,a]\times[0,a]: t \in [\alpha (s), \beta (s)]\}.$$
It is clear that for $Z=C_+ [0,a]$ it holds $AZ \subset Z$. 
The eigenproblem of these operators arises in the study of periodic solutions of a class of 
differential-delay equations
$$\varepsilon y^{\prime}(t)=g(y(t),y(t-\tau)), \quad \tau=\tau(y(t)),$$
with state-dependent delay (see e.g. \cite{MN02}). 

The mapping $A:Y \to Y$ is  subadditive and monotone and is a positively homogeneous and Lipschitz on $Z$. 
Moreover, $\|A\|_{LIP}=\|A\|=   \max _{(s_0, s_1) \in \mathcal{S}_1 } k (s_0, s_1)$, where \\
$\mathcal{S}_1 =\bigl\{(s_0, s_1): s_0 \in [0,a], s_1 \in [\alpha(s_0), \beta (s_0)]\bigr\}.$ Cleary, our Theorem \ref{uniform} applies to sets of such mappings.

Consequently, our Theorem  \ref{uniform} implies also to isomorphic max-plus mappings (see e.g. \cite{MN02} and the references cited there) and a Lipschitz seminorm with respect to a suitably induced metric. Note that a related result for uniform boundedness   (in fact contractivity) result for a Lipschitz seminorm of  semigroups of max-plus mappings was stated in    \cite{KFPS16},  \cite{AKFPS16}. However, observe that the Lipschitz seminorm there is defined with respect to a different metric than in our case.
}
\end{example}

We also point out the following related example from \cite{MP17} and \cite{MP17b}.

\begin{example} 
\label{bounded_sup}
{\rm
Let $M$ be a nonempty set and let $Y$ be the set of all bounded real functions on $M$.
With the norm $\|f\|_{\infty}=\sup\{|f(t)|:t\in M\}$ and natural operations, $Y$ is a Banach lattice.
Let $Z=Y_+$ 
and
let $k:M\times M\to [0,\infty)$ satisfy $\sup \bigl\{k(t,s):t,s\in M\bigr\}<\infty$.
Let $A:Y\to Y$ be defined by $(Af)(s)=\sup\{k(s,t)f(t):t\in M\}$. Then $A:Y \to Y$ is  subadditive and monotone mapping that satisfies $AZ\subset Z$ and is a positively homogeneous and Lipschitz on $Z$, therefore  Theorem \ref{uniform} applies to sets of such mappings. It also holds
 $\|A\|_{LIP}=\|A\|=\sup \bigl\{k(t,s):t,s\in M\bigr\}$. 
In particular, if $M$ is the set of all natural numbers $\NN$, our results apply to infinite bounded non-negative matrices $k=[k(i,j)]$ (i.e., $k(i,j) \ge 0$ for all $i,j \in \NN$ and    $\|k\|_{\infty}=\sup _{i,j \in \NN} k(i,j) < \infty$). In this case, $Y= l^{\infty}$ and 
$Z=l^{\infty}_+ $ and $\|A\|_{LIP}= \|A\| =\|k\|_{\infty}$.
}
\end{example}

\begin{remark}{\rm The special case of Example \ref{bounded_sup} when  
$M=\{1, \ldots , n\}$ for some $n\in \NN$ is well known and studied under the name max-algebra (an analogue of linear algebra). Together with  its isomorphic versions (max-plus algebra and min-plus algebra also known as tropical algebra) it provides 
an attractive way of describing a class of non-linear problems appearing for instance in manufacturing and transportation scheduling, information 
technology, discrete event-dynamic systems, combinatorial optimization, mathematical physics, DNA analysis, ... (see e.g.  \cite{B98}, \cite{Bu10}, \cite{MP15}, \cite{MP12}, \cite{GMW17}, 
\cite{PS05} and the references cited there).
}
\end{remark}

\begin{remark}{\rm Our results apply also to more general max-type operators studied in \cite[Section 5]{MN10}. The authors considered there finite sums of more general operators than in Example \ref{MNuss03} defined on a Banach space of continuous functions 
and  their restrictions to suitable closed cones. The assumptions of our results are satisfied also for these mappings and therefore also for a special case of cone-linear Perron-Frobenius operators studied there.  
\label{PF}
}
\end{remark}

\begin{remark}{ \rm Theorem \ref{uniform} applies to several classes of nonlinear integral operators (under suitable assumptions on the kernels and on the defining nonlinearities) including Hammerstein type operators 
(see e.g. \cite[Chapter 12, p.338 ]{APV04}, \cite{BB75}, \cite{S08}), Uryson type operators (see e.g  \cite[Chapter 12, p.339 ]{APV04}, \cite{I09}) and Hardy-Littlewood type operators (see e.g. \cite{BS13}, \cite{I14}, \cite{MaP12}, \cite{NLW16}). 
\label{Hammerstein}
}
\end{remark}

\vspace{1mm}

\vspace{0.2cm}

\vspace{3mm}

\baselineskip 5mm

{\it Acknowledgments.} 
The author acknowledges a partial support of  the Slovenian Research Agency (grants P1-0222 and J1-8133).  He also thanks the referee for several useful comments and remarks.\\

\vspace{2mm}

\noindent  Aljo\v{s}a Peperko \\
Faculty of Mechanical Engineering \\
University of Ljubljana \\
A\v{s}ker\v{c}eva 6\\
SI-1000 Ljubljana, Slovenia\\
{\it and} \\
Institute of Mathematics, Physics and Mechanics \\
Jadranska 19 \\
SI-1000 Ljubljana, Slovenia \\
e-mails : aljosa.peperko@fs.uni-lj.si; aljosa.peperko@fmf.uni-lj.si

\end{document}